\newtheorem{theorem}{Theorem}[section]
\newtheorem{definition}[theorem]{Definition}
\newtheorem{corollary}[theorem]{Corollary}
\newtheorem{remark}[theorem]{Remark}
\newtheorem{example}[theorem]{Example}
\renewcommand{\proof}{{\noindent \bf Proof:\ }}
\numberwithin{equation}{section}
\title[A Cayley-Hamilton-Ziebur Theorem for $X'=AX+F(t,X)$]{An extended version of the Cayley-Hamilton-Ziebur Theorem}	
\author[F. D. M. Bezerra]{Flank D. M. Bezerra$^*$}\thanks{$^*$Research partially supported by
CNPq \# 303039/2021-3, Brazil}
\address[F. D. M. Bezerra]{Departamento de Matem\'atica, Universidade Federal da Para\'iba, 58051-900 Jo\~ao Pessoa PB, Brazil.}
\email{flank@mat.ufpb.br}
\author[L. A. Santos]{Lucas A. Santos}
\address[L. A. Santos]{Instituto Federal da Para\'iba, 58051-900 Jo\~{a}o Pessoa PB, Brazil.}
\email{lucas92mat@gmail.com}
\begin{document}

\begin{abstract}
The aim of this paper is to prove a Cayley-Hamilton-Ziebur Theorem for non-autonomous semilinear matrix differential equations. Moreover we show the applicability of  results like these to ODE theory.

\vskip .1 in \noindent {\it Mathematical Subject Classification
2020:} 34A12; 34A30 
\newline {\it Key words and phrases:} Cayley-Hamilton-Ziebur Theorem; non-autonomous semilinear systems; characteristic polynomial; ordinary differential equations; fractional powers. 
\end{abstract}

\maketitle

\tableofcontents

\section{Introduction}

Non-autonomous semilinear ordinary differential equations are important in many research areas, not just Mathematics. A significant part of the results related to the applicability of Mathematical in our daily lives are associated with the study of first-order semilinear systems with constant coefficients of the type
\begin{equation}\label{Equa01}
\begin{cases}
X'(t)=AX(t)+F(t,X(t)),\ t>\tau,\\ 
X(\tau)=X_0,\ \tau\in\mathbb{R},
\end{cases}
\end{equation}
where $A\in\mathcal{M}(n;\mathbb{C})$,  $X_0\in \mathcal{M}(n\times 1;\mathbb{C})$,  $X\in C^1([\tau,T);\mathcal{M}(n\times 1;\mathbb{C}))$, and\linebreak $F:\mathbb{R}\times \mathcal{M}(n\times 1;\mathbb{C})\to  \mathcal{M}(n\times 1;\mathbb{C})$  is a non-linear map with suitable conditions of growth and regularity in the sense of the classic theorems of existence and uniqueness of solution and continuous dependence on the initial data, (see for instance \cite{HJK}). Here, $\mathcal{M}(k\times \ell;\mathbb{C})$  denotes the set of matrices with  $k$ lines and $\ell$ columns with entries in  $\mathbb{C}$; $\mathcal{M}(k;\mathbb{C})$ denotes the set of matrices with $k$ lines and $k$ columns with entries in   $\mathbb{C}$; $tr(M)$ denotes the trace of  $M\in \mathcal{M}(k;\mathbb{C})$ and $det(M)$ denotes the determinant of $M\in \mathcal{M}(k;\mathbb{C})$; and $M^i=M\cdot M\cdot\ldots\cdot M$ denotes the product of $i$ copies of $M$.

An important result related to the problem \eqref{Equa01} is the Cayley-Hamilton-Ziebur theorem that establishes the relationship between the solution vector of \eqref{Equa01} and its components.
Let us recall what the Cayley-Hamilton-Ziebur theorem precisely says. See \cite{Ziebur} for more details.

\begin{theorem}{\textbf{(Cayley-Hamilton-Ziebur)}}\label{Theorem_0}
A component $x_j$, for $j=1,\dots,n$, of the vector solution $X(t)$ for $X'(t)=AX(t)$ is a solution of the $n$th order linear homogeneous constant-coefficient differential equation whose characteristic equation is $\det(A-\lambda I)=0$.
\end{theorem}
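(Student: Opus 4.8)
The plan is to reduce the statement to the classical Cayley--Hamilton theorem via the elementary observation that differentiating the system repeatedly turns powers of $d/dt$ into powers of $A$. Write the characteristic polynomial of $A$ as
$$p(\lambda)=\det(\lambda I-A)=\lambda^n+a_{n-1}\lambda^{n-1}+\cdots+a_1\lambda+a_0,$$
so that the associated $n$th order scalar differential operator is $L=\frac{d^n}{dt^n}+a_{n-1}\frac{d^{n-1}}{dt^{n-1}}+\cdots+a_1\frac{d}{dt}+a_0$, whose characteristic equation is precisely $p(\lambda)=0$, equivalently $\det(A-\lambda I)=0$ (the two differ only by the factor $(-1)^n$).

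First I would establish the derivative identity $X^{(k)}(t)=A^kX(t)$ for every $k\geq 0$ by induction on $k$. The base case $k=0$ is trivial, and assuming $X^{(k)}(t)=A^kX(t)$, differentiating once and inserting $X'(t)=AX(t)$ gives $X^{(k+1)}(t)=A^kX'(t)=A^{k+1}X(t)$. Here I crucially use that $A$ has constant entries, so it commutes with $d/dt$.

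Next I would apply the operator $L$ directly to the vector $X$. Using the derivative identity with the convention $a_n=1$,
$$L[X](t)=\sum_{k=0}^{n}a_k X^{(k)}(t)=\sum_{k=0}^{n}a_k A^k X(t)=p(A)\,X(t).$$
By the Cayley--Hamilton theorem, $p(A)=0$ in $\mathcal{M}(n;\mathbb{C})$, hence $L[X](t)\equiv 0$. Reading this vector identity componentwise yields $L[x_j](t)=0$ for each $j=1,\dots,n$, which is exactly the assertion that $x_j$ solves the $n$th order constant-coefficient homogeneous linear ODE with characteristic equation $\det(A-\lambda I)=0$.

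The only genuine content, and thus the main obstacle, is the Cayley--Hamilton theorem $p(A)=0$ itself; the derivative identity and the componentwise reading are routine. For the extended non-autonomous semilinear setting studied later, I expect the same strategy to apply, but the derivative identity $X^{(k)}=A^kX$ will acquire forcing terms built from $F(t,X)$ and its successive total derivatives, so the difficulty will shift from invoking Cayley--Hamilton to bookkeeping and controlling these inhomogeneous contributions.
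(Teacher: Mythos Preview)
Your proof is correct and is precisely the $F\equiv 0$ specialization of the paper's proof of its main Theorem~\ref{Theorem_1}: invoke Cayley--Hamilton to get $p_A(A)=0$, establish $X^{(k)}=A^kX$ by repeated differentiation of $X'=AX$, and combine to obtain $\sum_k a_k X^{(k)}=p_A(A)X=0$. Your closing remark that in the semilinear case the identity $X^{(k)}=A^kX$ acquires inhomogeneous terms from $F$ and its total $t$-derivatives is exactly the bookkeeping the paper carries out there.
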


In other words, if 
\[
p_{A}(\lambda)=\lambda^n+a_{n-1}\lambda^{n-1}+\cdots+a_1\lambda+a_0
\]
denotes the characteristic polynomial of $A$, then Theorem \ref{Theorem_0} states that the vector solution $X=X(t)$ for $X'(t)=AX(t)$ satisfies

\begin{equation}\label{Eq0}
X^{(n)}+a_{n-1}X^{(n-1)}+\cdots+a_1X'+a_0X=0.
\end{equation}
Specifically, $X(t)$ is a vector linear combination of atoms constructed from the roots of the characteristic equation $\det(A-\lambda I) = 0$.

In particular, if $n=2$, then the components $x_1$ and $x_2$ of the solution application $X(t)=\begin{bmatrix} x_1\\ x_2\end{bmatrix}(t)$ of the problem $X'(t)=AX(t)$ is a solution of the linear homogeneous differential equation with constant coefficients of order $2$
\[
x''(t)-tr(A) x'(t)+det(A)x(t)=0.\\
\]
It is important recall that 
\[
det(A)=\dfrac{1}{2}[tr(A)^2-tr(A^2)].
\]

Analogously, if $n=3$, then the component $x_j$ (for $j=1,2,3$) of the solution application $X(t)=\begin{bmatrix} x_1\\ x_2\\ x_3 \end{bmatrix}(t)$ of the problem $X'(t)=AX(t)$ is a solution of the linear homogeneous differential equation with constant coefficients of order $3$
\[
x'''(t)-tr(A) x''(t)+ \dfrac{1}{2}[tr(A)^2-tr(A^2)]x'(t)-det(A)x(t)=0.
\]
It is important recall that 
\[
det(A)=\dfrac{1}{6}[tr(A)^3-3tr(A)tr(A^2)+2tr(A^3)].
\]

Roughly speaking, Cayley-Hamilton-Ziebur theorem provides an $n$th order differential equation whose the components of the solution of \eqref{Equa01} are solutions when the nonlinerity $F$ is the identically null application. Our main result in this paper provides a version of the Cayley-Hamilton-Ziebur Theorem for 
\[
X'=AX+F(t,X),
\]
where  $F: \mathbb{R}\times\mathcal{M}(n\times 1;\mathbb{C})\to  \mathcal{M}(n\times 1;\mathbb{C})$ is a non-linear application sufficiently smooth with suitable growth and regularity conditions. To the best of our knowledge, it is still not possible to find in the literature a discussion in this direction about non-autonomous semilinear ODE's.

\medskip

This paper is organized as follows.  In  Section \ref{Sec02}, we prove the main result of this paper. We also present some immediate consequences with some particular $F$. In  Section \ref{Sec03}, we present some applications.


\section{Main result}\label{Sec02}
Let us first introduce a definition of a class of polinomials associated with the characteristic polinomial of a matrix $A\in\mathcal{M}(n;\mathbb{C})$. These polinomials will play an important role in the main result.
\begin{definition}
Let  $A\in\mathcal{M}(n;\mathbb{C})$ with characteristic polynomial 
$$
p_{A}(\lambda)=\lambda^n+a_{n-1}\lambda^{n-1}+\cdots+a_1\lambda+a_0.
$$
We define, for $0\leqslant j\leqslant n$, the polynomial
$$
p_{j,A}(\lambda)=\lambda^j+a_{n-1}\lambda^{j-1}+\cdots+a_{n-j+1}\lambda+a_{n-j}.
$$
Note that $p_{n,A}(\lambda)=p_{A}(\lambda)$.
\end{definition}

The main result of this paper present a non-autonomous semilinear version of the Cayley-Hamilton-Ziebur Theorem, it is the following theorem.

\begin{theorem}\label{Theorem_1}
Let  $A\in\mathcal{M}(n;\mathbb{C})$,  $X_0=[x_{0j}]\in \mathcal{M}(n\times 1;\mathbb{C})$ and let\linebreak $F: \mathbb{R}\times\mathcal{M}(n\times 1;\mathbb{C})\to  \mathcal{M}(n\times 1;\mathbb{C})$ be a differentiable continuously map.  Let $X=X(t)\in C^1(\mathbb{R};\mathcal{M}(n;\mathbb{C}))$ be the unique solution of the problem  \eqref{Equa01}, if 
\[
p_{A}(\lambda)=\lambda^n+a_{n-1}\lambda^{n-1}+\cdots+a_1\lambda+a_0
\]
denotes the characteristic polynomial of $A$, then
\begin{equation}\label{Eqq}
\begin{cases}
\displaystyle X^{(n)}+a_{n-1}X^{(n-1)}+\cdots+a_1X'+a_0X-\sum_{j=0}^{n-2}p_{j,A}(A)\partial_t^{(n-j-1)}F(t,X)=p_{n-1,A}(A)F(t,X),\\
X(\tau)=X_0,\ \tau\in\mathbb{R},
\end{cases}
\end{equation}
Here, $a_{n}=1$, $X^{(0)}=X$ and $\partial_t^{(0)}F(t,X)=F(t,X)$.
\end{theorem}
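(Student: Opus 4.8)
The plan is to first obtain a closed formula for the $k$th derivative $X^{(k)}$ of the solution in terms of $A$, $X$, and the iterated time-derivatives of $F$ taken along the trajectory, and then to collapse the combination $X^{(n)}+a_{n-1}X^{(n-1)}+\cdots+a_0X$ by a single application of the Cayley-Hamilton theorem. Throughout I read $\partial_t^{(i)}F(t,X)$ as the $i$-fold total derivative $\tfrac{d^i}{dt^i}F(t,X(t))$ along the solution, which is what closes up the recursion (and is consistent with the convention $\partial_t^{(0)}F=F$ stated in the theorem); I assume $F$ smooth enough that the derivatives up to order $n-1$ exist.

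The first step is to prove by induction on $k\geqslant 1$ the identity
\[
X^{(k)}=A^kX+\sum_{i=0}^{k-1}A^{k-1-i}\,\partial_t^{(i)}F(t,X).
\]
The base case $k=1$ is exactly $X'=AX+F$. For the inductive step I differentiate the formula for $X^{(k)}$, replace the resulting $X'$ by $AX+F$, use $\tfrac{d}{dt}\partial_t^{(i)}F=\partial_t^{(i+1)}F$, and reindex; the term $A^kF$ produced by differentiating $A^kX$ supplies the missing $i=0$ slot, yielding the identity with $k+1$ in place of $k$.

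The second step is to form $\sum_{k=0}^{n}a_kX^{(k)}$ with $a_n=1$. Substituting the identity above and collecting the terms multiplying $X$ produces the factor $\sum_{k=0}^{n}a_kA^k=p_A(A)$, which vanishes by Cayley-Hamilton. What remains is the double sum $\sum_{k=0}^{n}a_k\sum_{i=0}^{k-1}A^{k-1-i}\partial_t^{(i)}F$. Interchanging the order of summation so that $i$ runs from $0$ to $n-1$ and $k$ from $i+1$ to $n$, and substituting $m=k-1-i$, the inner sum becomes $\sum_{m=0}^{n-1-i}a_{1+i+m}A^m$; comparing with the definition $p_{j,A}(\lambda)=\sum_{m=0}^{j}a_{n-j+m}\lambda^m$ identifies it as $p_{n-1-i,A}(A)$.

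Finally, reindexing by $j=n-1-i$ rewrites the whole expression as $\sum_{j=0}^{n-1}p_{j,A}(A)\,\partial_t^{(n-1-j)}F$, and isolating the $j=n-1$ term (for which $\partial_t^{(0)}F=F$) separates the right-hand side $p_{n-1,A}(A)F(t,X)$ from the sum $\sum_{j=0}^{n-2}p_{j,A}(A)\,\partial_t^{(n-1-j)}F$ that is transposed to the left in \eqref{Eqq}; the initial condition $X(\tau)=X_0$ is inherited directly from \eqref{Equa01}. I expect the only real obstacle to be bookkeeping: keeping the two reindexings (first $m=k-1-i$, then $j=n-1-i$) consistent so that the coefficient pattern matches the definition of $p_{j,A}$ exactly. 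All the analytic content is carried by the elementary induction and the one invocation of Cayley-Hamilton.
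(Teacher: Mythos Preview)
Your proposal is correct and follows essentially the same route as the paper: both derive the formula $X^{(k)}=A^kX+\sum A^{k-1-i}\partial_t^{(i)}F$ (the paper by writing out $k=1,2,3,4$ and saying ``continue in this fashion'', you by a clean induction), then take the $a_k$-weighted sum, kill the $X$-terms by Cayley--Hamilton, identify the remaining coefficient polynomials as $p_{j,A}(A)$, and finally split off the $j=n-1$ term. Your bookkeeping with the double reindexing $m=k-1-i$, $j=n-1-i$ is a bit more explicit than the paper's, but the argument is the same.
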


This theorem is a sharpened version of Cayley-Hamilton-Ziebur Theorem, since it yields information about the nonlinearity $F$. Namely, the last term on the left-hand side and the term on the right-hand side of the first equation of \eqref{Eqq} provide the action of the nonlinearity $F$ comparing to \eqref{Eq0}.

The importance of this result can be seen from different aspects. From the point of view of the theory of non-autonomous semilinear differential equations, the formulation of the problem \eqref{Equa01} as in \eqref{Eqq} provide a new look at the problem and lead us to a different method to discuss the solvability and regularity of the equation in question. Theorem \ref{Theorem_1} can also be used to continue our previous analysis in \cite{BS} on evolution equation of third-order  in time, considering semilinear fractional approximations of these equations. To be more precise, let \eqref{Equa01} be an initial problem, a typical problem to be considered is the following spectral counterpart of \eqref{Equa01}

\begin{equation}\label{Equa01Spectral-f}
\begin{cases}
X_f'(t)=f(A)X(t)+F(t,X_f(t)),\ t>\tau,\\ 
X_f(\tau)=X_{f,0},\ \tau\in\mathbb{R},
\end{cases}
\end{equation}
where $f(A)$ is a matrix defined by spectral theory under suitable spectral conditions on $A$; namely, we can have $f(z)=z^\alpha$, $\alpha\in (0,1)$, $f(z)=\log z$, $f(z)=\sin(z)$, $f(z)=\cos(z)$,  etc. In this case, one may ask what is the scalar equation associated with \eqref{Equa01Spectral-f}? This type of questioning is associated with solvability and regularity of the equation \eqref{Equa01Spectral-f}, as well as,  with a comparison with the initial problem \eqref{Equa01}, see e.g. \cite{BCCN}, \cite{BS}, \cite{BBN}, \cite{MNS}.

Now we will prove the Theorem \ref{Theorem_1}.

\proof
Let
\[
r^n+a_{n-1}r^{n-1}+\cdots+a_1r+a_0=0
\]
be the characteristic equation of $A$. From the Cayley-Hamilton Theorem, we have
\begin{equation}\label{Eqcharac}
A^n+a_{n-1}A^{n-1}+\cdots+a_1A+a_0I=0.
\end{equation}
Right-multiplying \eqref{Eqcharac} by $X=X(t)$, we obtain
\[
A^nX+a_{n-1}A^{n-1}X+\cdots+a_1AX+a_0X=0.
\]
Now, differentiate $X'=AX+F(t,X)$ with respect to $t$ and see that
\[
X''=AX'+\partial_tF(t,X(t))
\]
that is
\[
X''=A^2X+AF(t,X)+\partial_tF(t,X(t)).
\]
Now, differentiate $X'=AX+F(t,X)$ two times with respect to $t$ and see that 
\[
X'''=AX''+\partial_t^2F(t,X(t)) 
\]
that is
\[
X'''=A^3X+A^2F(t,X)+A\partial_tF(t,X(t))+\partial_t^2F(t,X(t)).
\]
Now, differentiate $X'=AX+F(t,X)$ three times with respect to $t$ and see that
\[
X^{(4)}=AX'''+\partial_t^3F(t,X(t))
\]
that is
\[
X^{(4)}=A^4X+A^3F(t,X)+A^2\partial_tF(t,X(t))+A\partial_t^2F(t,X(t))+\partial_t^3F(t,X(t)).
\]
We continue in this fashion to obtain 
\[
X^{(k)}=A^kX+\displaystyle\sum_{j=0}^{k-1}A^j\partial_t^{(k-j-1)}F(t,X(t))
\]
for any $k\in\{1,\ldots,n\}$. Multiplying each $X^{(k)}$ by $a_{k}$, for $k\in\{1,\ldots,n\}$, and adding, we obtain
\begin{equation}\label{MainEqqq}
X^{(n)}+a_{n-1}X^{(n-1)}+\cdots+a_1X'+a_0X=\sum_{j=0}^{n-1}p_{j,A}(A)\partial_t^{(n-j-1)}F(t,X(t)). 
\end{equation}

Now note that
\[
\sum_{j=0}^{n-1}p_{j,A}(A)\partial_t^{(n-j-1)}F(t,X(t))=\sum_{j=0}^{n-2}p_{j,A}(A)\partial_t^{(n-j-1)}F(t,X(t))+p_{n-1,A}(A)F(t,X(t))
\]
and all the terms of $\sum_{j=0}^{n-2}p_{j,A}(A)\partial_t^{(n-j-1)}F(t,X(t))$ have at least one of the derivatives $X', X'', \ldots, X^{(n)}$ which leads us to rewrite \eqref{MainEqqq} with the term $\sum_{j=0}^{n-2}p_{j,A}(A)\partial_t^{(n-j-1)}F(t,X(t))$ on left side of \eqref{MainEqqq}, and therefore
\[
X^{(n)}+a_{n-1}X^{(n-1)}+\cdots+a_1X'+a_0X-\sum_{j=0}^{n-2}p_{j,A}(A)\partial_t^{(n-j-1)}F(t,X(t))=p_{n-1,A}(A)F(t,X(t)).\ \qed
\]
\qed

\begin{remark}
Note that the coefficients of the characteristic equation
\[
r^n+a_{n-1}r^{n-1}+\cdots+a_1r+a_0=0
\]
can be obtained from
\[
r^n+a_{n-1}r^{n-1}+\cdots+a_1r+a_0=\displaystyle\sum_{k=0}^nr^{n-k}(-1)^ktr(\Lambda^kA),
\]
where
\[
tr(\Lambda^kA)=\dfrac{1}{k!}det\begin{bmatrix}
tr(A) & k-1 & 0 & \cdots & 0\\
tr(A^2) & tr(A) &  k-2 &  \cdots & 0\\
\vdots & \vdots & \vdots & \ddots & \vdots \\ 
tr(A^{k-1}) & tr(A^{k-2}) &  tr(A^{k-3}) &  \cdots &  1\\
tr(A^k) & tr(A^{k-1}) &  tr(A^{k-2}) &  \cdots & tr(A)
\end{bmatrix}.
\]
\end{remark}

\begin{remark}
In particular, for $F\circ X$, it is convenient recall that the $k$th derivative of the composition $F\circ X$ at $t$ is
\[
\partial_t^{(k)}(F(X(t))=\displaystyle\sum\dfrac{k!}{i_1!i_2!\cdots i_k!}F^{(i_1+i_2+\cdots+i_k)}(X(t))\prod_{j=1}^k\Big(\dfrac{X^{(j)}(t)}{j!}\Big)^{i_j}
\]
where the sum is over all $k-$tuples $(i_1,i_2,\ldots,i_k)$ of nonnegative integers such that\linebreak $1\cdot i_1+2\cdot i_2+\cdots+k\cdot i_k=k$, this result is known in the literature as \textbf{Fa\`a di Bruno's Formula}, see e.g. \cite{Roman}.
\end{remark}

\begin{corollary}\label{Corol1}
Let $F: \mathbb{R}\times \mathcal{M}(2\times 1;\mathbb{C})\to  \mathcal{M}(2\times 1;\mathbb{C})$ be a continuously differentiable function given by
\[
F(t,X)=\begin{bmatrix} 
0\\ f(t,x_1)
\end{bmatrix}
\]
for any $X=\begin{bmatrix} 
x_1\\ x_2
\end{bmatrix}$, where $f:\mathbb{R}^2\to\mathbb{R}$ is a function with suitable conditions of growth and regularity. Then the solution application $X=[x_j]$ of the semilinear Cauchy problem \eqref{Equa01} satisfies 
\[
\begin{cases}
X''(t)-tr(A)X'(t)+det(A)X(t)-\partial_tF(t,X(t))=(-tr(A)I+ A)F(t,X(t)),\ t>\tau,\\
X(\tau)=X_0=\begin{bmatrix}
x_{01}\\x_{02}
\end{bmatrix}, \tau\in\mathbb{R}.
\end{cases}
\]
More precisely, $x_1$ is the unique solution of the non-autonomous semilinear Cauchy problem
\begin{equation}\label{EqEntrada21}
\begin{cases}
x''(t)-tr(A)x'(t)+det(A)x(t)=\alpha_{12}f(t,x(t)),\ t>\tau,\\
x(\tau)=x_{01},\ x'(\tau)=\alpha_{11}x_{01}+\alpha_{12}x_{02},\ \tau\in\mathbb{R},
\end{cases}
\end{equation}
where $A=[\alpha_{ij}]\in\mathcal{M}(n;\mathbb{C})$. 
\end{corollary}

\proof
The result follows immediately from Theorem \ref{Theorem_1} for $n=2$, see that 
\begin{eqnarray*}
p_{0,A}(A)&=&I\\
p_{1,A}(A)&=&-tr(A) I+A 
\end{eqnarray*}
and 
\begin{eqnarray*}
\partial_tF(t,X(t))&=&\begin{bmatrix} 0\\ \partial_tf(t,x_1)\end{bmatrix}\\
(-tr(A)I+ A)F(t,X(t))&=& \begin{bmatrix} -\alpha_{22}&\alpha_{12}\\ \alpha_{21}&-\alpha_{11}\end{bmatrix}\begin{bmatrix} 0\\ f(t,x_1)\end{bmatrix}=\begin{bmatrix} \alpha_{21}f(t,x_1)\\ -\alpha_{11}f(t,x_1)\end{bmatrix} \qed
\end{eqnarray*}
\qed

\begin{corollary}\label{Co3}
Let $F: \mathbb{R}\times\mathcal{M}(3\times 1;\mathbb{C})\to  \mathcal{M}(3\times 1;\mathbb{C})$ be a continuously differentiable function given by
\begin{equation}\label{asdf4asasa6}
F(t,X)=\begin{bmatrix} 
0\\ 0\\ f(t,x_1)
\end{bmatrix}
\end{equation}
for any $X=\begin{bmatrix} 
x_1\\ x_2\\ x_3
\end{bmatrix}$, where  $f:\mathbb{R}^2\to\mathbb{R}$ is a function with suitable conditions of growth and regularity. Then the solution application $X=[x_j]$ of the  Cauchy problem \eqref{Equa01} satisfies 
\[
\begin{cases}
X'''(t)-tr(A)X''(t)+ \dfrac{1}{2}[tr(A)^2-tr(A^2)]X'(t)-det(A)X(t)-\partial_t^2F(t,X(t))\\
-(A+\dfrac{1}{2}[tr(A)^2-tr(A^2)]I)\partial_tF(t,X(t))(t)=(A^2-tr(A)A+\dfrac{1}{2}[tr(A)^2-tr(A^2)]I)F(t,X(t)),\\
X(\tau)=X_0=\begin{bmatrix}
x_{01}\\x_{02}\\x_{03}
\end{bmatrix},\ \tau\in\mathbb{R}.
\end{cases}
\]
More precisely, $x_1$ is the unique  solution of the non-autonomous semilinear Cauchy problem
\begin{equation}\label{EqEntrada21}
\begin{cases}
x'''(t)-tr(A)x''(t)+ \dfrac{1}{2}[tr(A)^2-tr(A^2)]x'(t)-det(A)x(t)-\alpha_{13}\partial_tf(t,x)=\\
=(\alpha_{12}\alpha_{23}-\alpha_{22}\alpha_{13})f(t,x),\ t>\tau,\\
x(\tau)=x_{01},\ x'(\tau)=\alpha_{11}x_{01}+\alpha_{12}x_{02}+\alpha_{13}x_{03},\\ x''(\tau)=(\alpha_{11}^2+\alpha_{12}\alpha_{21}+\alpha_{13}\alpha_{31})x_{01}+(\alpha_{11}\alpha_{22}+\alpha_{12}\alpha_{22}+\alpha_{13}\alpha_{32})x_{02}+\\
+(\alpha_{11}\alpha_{13}+\alpha_{12}\alpha_{23}+\alpha_{13}\alpha_{33})x_{03} +\alpha_{13}f(0,x_{01}),\ \tau\in\mathbb{R}.
\end{cases}
\end{equation}
\end{corollary}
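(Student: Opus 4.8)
The plan is to obtain the statement as a direct specialization of Theorem~\ref{Theorem_1} to the case $n=3$, and then to read off the first component of the resulting vector identity. First I would record the coefficients of the characteristic polynomial of a $3\times 3$ matrix, namely $a_2=-tr(A)$, $a_1=\tfrac12[tr(A)^2-tr(A^2)]$ and $a_0=-det(A)$, and then evaluate the auxiliary polynomials of the defining formula for $p_{j,A}$: one finds $p_{0,A}(A)=I$, $p_{1,A}(A)=A-tr(A)I$ and $p_{2,A}(A)=A^2-tr(A)A+\tfrac12[tr(A)^2-tr(A^2)]I$. Substituting these three values into \eqref{Eqq} yields at once the displayed vector system for $X$, with $\partial_t^{(2)}F$, $\partial_tF$ and $F$ weighted respectively by $I$, $p_{1,A}(A)$ and $p_{2,A}(A)$.

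To pass from the vector identity to the scalar problem for $x_1$, I would use the special shape of $F$ in \eqref{asdf4asasa6} and extract the first row of each term. Since only the third entry of $F$ (and of each of its time derivatives) is nonzero, the first component of $p_{1,A}(A)\partial_tF$ equals the $(1,3)$-entry of $p_{1,A}(A)$ times $\partial_t f$, i.e.\ $\alpha_{13}\partial_t f$, while the first component of $p_{2,A}(A)F$ equals the $(1,3)$-entry of $p_{2,A}(A)$ times $f$. The one genuine computation is this last entry: from $(A^2)_{13}=\alpha_{11}\alpha_{13}+\alpha_{12}\alpha_{23}+\alpha_{13}\alpha_{33}$ together with the contribution $-tr(A)\alpha_{13}$ coming from $-tr(A)A$, the diagonal pieces $\alpha_{11}\alpha_{13}$ and $\alpha_{33}\alpha_{13}$ cancel against $tr(A)\alpha_{13}=(\alpha_{11}+\alpha_{22}+\alpha_{33})\alpha_{13}$, leaving exactly $\alpha_{12}\alpha_{23}-\alpha_{22}\alpha_{13}$. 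This produces the stated scalar third-order equation with right-hand side $(\alpha_{12}\alpha_{23}-\alpha_{22}\alpha_{13})f$.

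For the initial data I would differentiate the original system rather than the scalar equation. The value $x_1(\tau)=x_{01}$ is immediate; taking the first row of $X'=AX+F$ and using that $F$ has vanishing first entry gives $x_1'(\tau)=\alpha_{11}x_{01}+\alpha_{12}x_{02}+\alpha_{13}x_{03}$; and taking the first row of the identity $X''=A^2X+AF(t,X)+\partial_t F(t,X(t))$ already derived inside the proof of Theorem~\ref{Theorem_1} gives $x_1''(\tau)$ as the stated combination of the entries $(A^2)_{1j}$ against $x_{0j}$, together with the term $\alpha_{13}f(\tau,x_{01})$ arising as the first component of $AF$. Uniqueness of $x_1$ then follows from the standard existence–uniqueness theory for the scalar third-order Cauchy problem under the assumed growth and regularity of $f$.

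I expect the only delicate point to be the bookkeeping of the $(1,3)$-entries of $p_{1,A}(A)$ and $p_{2,A}(A)$ and the cancellation that collapses $p_{2,A}(A)_{13}$ to $\alpha_{12}\alpha_{23}-\alpha_{22}\alpha_{13}$; everything else is a routine substitution into the already established formula \eqref{Eqq} and a componentwise reading exploiting the sparsity of $F$.
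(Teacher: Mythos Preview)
Your approach is exactly the paper's: specialize Theorem~\ref{Theorem_1} to $n=3$, list $p_{0,A}(A)$, $p_{1,A}(A)$, $p_{2,A}(A)$, and read off the first row using the sparsity of $F$; your write-up is in fact more explicit than the paper's, which does not display the $(1,3)$-entry cancellation for $p_{2,A}(A)$ or the derivation of the initial data.

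One small wrinkle worth flagging: from the definition $p_{1,A}(\lambda)=\lambda+a_{n-1}$ you correctly obtain $p_{1,A}(A)=A-\operatorname{tr}(A)I$, whereas both the displayed vector identity in the corollary and the paper's own proof record $p_{1,A}(A)=A+\tfrac12[\operatorname{tr}(A)^2-\operatorname{tr}(A^2)]I$. So your sentence ``substituting these three values \dots yields at once the displayed vector system'' is not literally true for the $\partial_tF$ coefficient. This discrepancy is harmless for the scalar equation \eqref{EqEntrada21}, since only the $(1,3)$-entry of $p_{1,A}(A)$ enters and that entry is $\alpha_{13}$ for either expression; but you should either note the apparent misprint in the vector statement or adjust your claim accordingly.
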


\proof
The result follows immediately from Theorem \ref{Theorem_1} for $n=3$, since 
\[
p_{0,A}(A)=I,\ \ p_{1,A}(A)= A+\dfrac{1}{2}[tr(A)^2-tr(A^2)] I,\ \ p_{2,A}(A)= A^2-tr(A)A+\dfrac{1}{2}[tr(A)^2-tr(A^2)]I,
\]
and 
\begin{eqnarray*}
\partial_t^2F(t,X(t))(t)&=&\begin{bmatrix} 0\\ 0\\ \partial_t^2f(t,x_1)\end{bmatrix}\\
(A+\dfrac{1}{2}[tr(A)^2-tr(A^2)]I)\partial_tF(t,X(t))(t) &=& \begin{bmatrix} \alpha_{13}\partial_t f(t,x_1)\\ e_{21} \\ e_{31} \end{bmatrix}\\
(A^2-tr(A)A+\dfrac{1}{2}[tr(A)^2-tr(A^2)]I)F(t,X(t))(t) &=& \begin{bmatrix} (\alpha_{12}\alpha_{23}-\alpha_{22}\alpha_{13})f(t,x_1)\\ g_{21} \\ g_{31} \end{bmatrix}.
\end{eqnarray*}
Note that the explicity definition of the terms $e_{21},\ e_{31},\ g_{21}$ and $g_{31}$ does not matter, as we are only dealing with the first entry of the matrix equations above. \qed
\qed

\begin{remark}
We can consider the problem  \eqref{Equa01} with $A=A(t)\in\mathcal{M}(n;\mathbb{C})$, $t\in I$, where $I\subset \mathbb{R}$  is an interval  and $F: \mathbb{R}\times\mathcal{M}(n\times 1;\mathbb{C})\to  \mathcal{M}(n\times 1;\mathbb{C})$ it is a non-linear application with suitable growth and regularity conditions; that is, consider the singularly non-autonomous semilinear problem
\[
\begin{cases}
X'(t)+A(t)X(t)=F(t,X(t)),\ t>\tau,\\
X(\tau)=X_0,\ \tau\in\mathbb{R}.
\end{cases}
\]
To the best of our knowledge, it is still not possible to find in the literature a discussion in this direction about  singularly non-autonomous semilinear ODE's. Of course, if $A(t)=A+B(t)$ and $A,B(t)\in\mathcal{M}(n;\mathbb{C})$ are non-zero matrices and the application $[0,\infty)\ni t\mapsto B(t)\in \mathcal{M}(n;\mathbb{C})$ is continuously differentiable, then we can rewrite the singularly non-autonomous semilinear equation 
\[
X'(t)+A(t)X(t)=F(t,X(t))
\]
as the following  non-autonomous semilinear equation
\[
X'(t)+AX(t)=G(t,X(t))
\]
where $G(t,X(t))=F(t,X(t))-B(t)$ and our results apply in the context of local solubility of the equations.
\end{remark}

\begin{remark}
It is well-know that coefficients of the characteristic polynomial  can also be expressed directly in terms of the eigenvalues of $A$, as shown in \cite{BPB}, therefore the problem \eqref{Eqq} can be present in terms of the eigenvalues of $A$.
\end{remark}

\section{Applications}\label{Sec03}

In section we show some applications of our previous results in the cases $n=2$ and $n=3$. For a nonsingular matrix $A\in\mathcal{M}(k;\mathbb{C})$ and $\alpha\in(0,1)$ we define 
\[
A^\alpha=e^{\alpha\log A}.
\] 
Here the logarithm is the principal matrix logarithm, the matrix function built on the principal scalar logarithm, and so the eigenvalues of $\log A$ lie in  $\{z\in\mathbb{C};-\pi<Im z\leqslant \pi\}$. For positive definite matrices and $\alpha\in(0,1)$, an  integral expression for $A^\alpha$ valid   is 
\begin{equation}\label{eq:to-compute-fractional-powers}
A^{\alpha}=\dfrac{\sin \alpha \pi}{\pi}\int_0^\infty\lambda^{\alpha-1}A(\lambda I+A)^{-1}d\lambda.
\end{equation}

\begin{example}
For $2\times 2$ real matrices of the form
\[
A=
\begin{bmatrix}
a & b\\
c & a
\end{bmatrix}
\]
with $bc < 0$ we have an explicit formula for $A^\alpha$ with $\alpha\in(0,1)$. It is easy to see that $A$ has eigenvalues $\lambda_{\pm}=a\pm id$, where $d=\sqrt{-bc}$. Let $\theta=\mbox{arg}(\lambda_+) \in (0,\pi)$ and $r = |\lambda_+|$.   It can be shown that
\begin{equation}\label{FractPow2}
A^\alpha=
\dfrac{r^\alpha}{d}
\begin{bmatrix}
d\cos(\alpha\theta) & b\sin(\alpha\theta)\\
c\sin(\alpha\theta) & d\cos(\alpha\theta)
\end{bmatrix}.
\end{equation}

A typical problem of approximation theory in differential equations and its fractional approximations can be seen in simple harmonic motions; namely, consider a mass $m>0$ suspended from a spring attached to a rigid support. Gravity is pulling the mass downward and the restoring force of the spring is pulling the mass upward, when these two forces are equal, the mass is said to be at the equilibrium position. If the mass is displaced from equilibrium, it oscillates up and down. Suppose also that the mass is subject to time-dependent external forces. This behavior can be modeled by a non-autonomous semilinear second-order constant-coefficient differential equation. 

Let $x=x(t)$ denote the displacement of the mass from equilibrium, it is customary to adopt the convention that down is positive. Thus, a positive displacement indicates the mass is below the equilibrium point, whereas a negative displacement indicates the mass is above equilibrium. Thanks to  Hooke’s law and Newton’s second law we have
\begin{equation}\label{EqASD000}
x''(t)+\dfrac{k}{m} x(t)=f(t,x(t)),
\end{equation}
where  $k>0$ is a constant factor characteristic of the spring  and $f:\mathbb{R}^2\to\mathbb{R}$ is a function with suitable conditions of growth and regularity. Note that \eqref{EqASD000}  can be rewrite as a semilinear matrix differential equation
\begin{equation}\label{EqFracPoASDjhjh}
X'(t)=\varLambda_{(\omega)} X(t)+F(t,X(t)),
\end{equation}
where $\omega=\sqrt{\dfrac{k}{m}}$, 
\[
X(t)=\begin{bmatrix} x(t) \\ x'(t) \end{bmatrix},\quad \varLambda_{(\omega)}=
\begin{bmatrix}
0 & 1\\
-\omega^2 & 0
\end{bmatrix},\quad F(t,X(t))=\begin{bmatrix} 0 \\ f(t,x(t)) \end{bmatrix}.
\]

Using \eqref{FractPow2} we have
\[
\varLambda_{(\omega)}^\alpha=
\omega^{\alpha-1}
\begin{bmatrix}
 \omega\cos\Big(\dfrac{\alpha\pi}{2}\Big) & \sin\Big(\dfrac{\alpha\pi}{2}\Big)\\ 
-\omega^2\sin\Big(\dfrac{\alpha\pi}{2}\Big) & \omega\cos\Big(\dfrac{\alpha\pi}{2}\Big).
\end{bmatrix}
\]
we can to present a class of fractional approximations of \eqref{EqFracPoASDjhjh} given by 
\begin{equation}\label{EqFracPoASD}
X_\alpha'(t)=\varLambda_{(\omega)}^\alpha X_\alpha(t)+F(t,X_\alpha(t)),\quad \alpha\in(0,1),
\end{equation}
and from the point of view of  semilinear scalar differential equations, thanks to  Corollary \ref{Corol1} (see \eqref{EqEntrada21}) and \eqref{FractPow2}, we have the following fractional approximations of \eqref{EqASD000}
\begin{equation}\label{EqASDfrac}
x_\alpha''(t)+2\omega^\alpha\cos\Big(\frac{\alpha\pi}{2}\Big)x'_\alpha(t)+\omega^{\alpha+1} x_\alpha(t)=\omega^{\alpha-1}\sin\Big(\frac{\alpha\pi}{2}\Big) f(t,x_\alpha(t)),
\end{equation}
for $\alpha\in(0,1)$. The presence of the term $2\omega^\alpha\cos(\frac{\alpha\pi}{2})x'_\alpha(t)$ in \eqref{EqASDfrac} with $2\omega^\alpha\cos(\frac{\alpha\pi}{2})>0$ for any  $\alpha\in(0,1)$ allows us to conclude that the energy of system \eqref{EqASD000} is dissipated for long time. 
\end{example}

\begin{example}
Third-order ordinary differential equations arise from a variety of different areas of applied mathematics and physics, e.g.,  in the deflection of a curved beam having a constant or varying cross-section, a three-layer beam, electromagnetic waves or gravity driven flows, see e.g. \cite{ZZSM}. To fix our attention, consider the non-autonomous semilinear ordinary differential equation
\begin{equation}\label{EqASD}
x'''(t)+\beta x(t)=f(t,x(t)),
\end{equation}
where $\beta>0$ and $f:\mathbb{R}\to\mathbb{R}$ is a function with suitable conditions of growth and regularity. Note that \eqref{EqASD}  can be rewrite as a semilinear matrix differential equation
\begin{equation}\label{EqFracPoASD}
X'(t)=\varLambda_{(\beta)} X(t)+F(t,X(t)),
\end{equation}
where
\[
X(t)=\begin{bmatrix} x(t) \\ x'(t)\\ x''(t) \end{bmatrix},\quad \varLambda_{(\beta)}=
\begin{bmatrix}
0 & 1 & 0\\
0 & 0 & 1\\
-\beta & 0 & 0
\end{bmatrix},\quad F(t,X(t))=\begin{bmatrix} 0 \\ 0\\ f(t,x(t)) \end{bmatrix}.
\]

Using \eqref{eq:to-compute-fractional-powers} we have
\[
\varLambda_{(\beta)}^\alpha=
\left[\begin{matrix}
-k_{\alpha,0}\beta^{\frac{\alpha}{3}}&k_{\alpha,2}\beta^{\frac{\alpha-2}{3}}&-k_{\alpha,1}\beta^{\frac{\alpha+1}{3}}\\
k_{\alpha,1}\beta^{\frac{\alpha+4}{3}}&-k_{\alpha,0}\beta^{\frac{\alpha}{3}}&k_{\alpha,2}\beta^{\frac{\alpha-2}{3}}\\
-k_{\alpha,2}\beta^{\frac{\alpha+1}{3}}&k_{\alpha,1}\beta^{\frac{\alpha+4}{3}}&-k_{\alpha,0}\beta^{\frac{\alpha}{3}}
\end{matrix}\right]
\]
where
\[
k_{\alpha,j} = \frac13\left(2\cos{\tfrac{2\pi (\alpha+j)}{3}}+1\right),\ \ \text{for\ } j\in\{0,1,2\}.
\]
Moreover, these coefficients satisfy the following properties
\[
\det \left[\begin{matrix}
-k_{\alpha,0}&k_{\alpha,2}&-k_{\alpha,1}\\
k_{\alpha,1}&-k_{\alpha,0}&k_{\alpha,2}\\
-k_{\alpha,2}&k_{\alpha,1}&-k_{\alpha,0}
\end{matrix}\right]= 1,
\]
\[
k_{\alpha,0}+k_{\alpha,1}+k_{\alpha,2} = 1,
\]
and
\[
\begin{cases}
k_{\alpha,0}^2-k_{\alpha,1}k_{\alpha,2} = k_{\alpha,0}\\
k_{\alpha,1}^2-k_{\alpha,0}k_{\alpha,2} = k_{\alpha,1}\\
k_{\alpha,2}^2-k_{\alpha,0}k_{\alpha,1} = k_{\alpha,2}.
\end{cases}
\]

We can to present a class of fractional approximations of \eqref{EqFracPoASD} given by 
\begin{equation}\label{EqFracPoASD}
X_\alpha'(t)=\varLambda_{(\beta)}^\alpha X_\alpha(t)+F(t,X_\alpha(t)),\quad \alpha\in(0,1),
\end{equation}
and from the point of view of  semilinear scalar differential equations, thanks to  Corollary \ref{Co3} (see \eqref{EqEntrada21}) and \eqref{FractPow2}, we have the following fractional approximations of \eqref{EqASD} 
\begin{equation}\label{EqASD}
x_\alpha'''(t)-\Big[2\cos\Big({\dfrac{2\pi \alpha}{3}}\Big)+1\Big]\beta^{\frac{\alpha}{3}} x_\alpha''(t)-\Big[2\cos\Big({\dfrac{2\pi \alpha}{3}}\Big)+1\Big]\beta^{\frac{2\alpha}{3}} x'_\alpha(t)+\beta^\alpha x_\alpha(t)=f(t,x_1(t)),
\end{equation} 
for $\alpha\in(0,1)$.
\end{example}


\begin{thebibliography}{}

\bibitem{BCCN}  F. D. M. Bezerra, A. N. Carvalho, J. Cholewa, and M. J. D. Nascimento, Parabolic approximation of damped wave equations via fractional powers: fast growing nonlinearities and continuity of the dynamics, J. Math. Anal. Appl., {\bf450} (1) (2017) 377--405. 

\bibitem{BS} F. D. M. Bezerra e  L. A. Santos, Fractional powers approach of operators for abstract evolution equations of third order in time, J. Differential Equations, \textbf{269} (2020) 5661--5679.

\bibitem{BBN}  F. D. M. Bezerra, M. Belluzi, and M. J. D. Nascimento, On spectral and fractional powers of damped wave equations, Commun. Pure Appl. Math, 2022.

\bibitem{BPB} B. P. Brooks, The coefficients of the characteristic polynomial in terms of the eigenvalues and the elements of an $n\times n$ matrix, Appl. Math. Lett., \textbf{19}, 6 (2006) 511--515.

\bibitem{HJK}  J. K. Hale, \textit{Ordinary Differential Equations}. John Wiley \& Sons, Inc., New York, 1969 (reprinted by Dover Publications, 2009).

\bibitem{MNS} M. Belluzi, M. J. D. Nascimento and K. Schiabel, On a cascade system of Schrödinger equations. Fractional powers approach, J. Math. Anal. Appl., \textbf{506} (2022) 125644.

\bibitem{Roman} S. Roman, The Formula of Fa\`a di Bruno, Amer. Math. Monthly, \textbf{87} (1980) 805--809.

\bibitem{Ziebur} A. D. Ziebur, On determining the structure of $A$ by analysing $e^{At}$, SIAM Review, \textbf{12} 1 (1970) 98--102.

\bibitem{ZZSM} C. Zhai, L. Zhao, S. Li and H. R. Marasi, On some properties of positive solutions for a third-order three-point boundary value problem with a parameter, Adv. Differ. Equ., \textbf{2017} (2017) 187.
\end{thebibliography}
 \end{document}